\newtheorem{theorem}{Theorem}[section]
\newtheorem{proposition}[theorem]{Proposition}
\newtheorem{lemma}[theorem]{Lemma}
\newtheorem{corollary}[theorem]{Corollary}
\newtheorem{proof}{\textmd{\textit{Proof.}}}
\newtheorem{remark}[theorem]{Remark}
\newtheorem{example}[theorem]{Example}
\newtheorem{definition}[theorem]{Definition}
\newcommand{\qedd}{\hfill \Box}
\newcommand{\R}{\ensuremath{\mathbb{R}}}
\def\innerprod{\,\,\hbox to 4pt{\hrulefill}\kern-3.25pt\hbox{
     \vrule height7pt\,\,}}
\date{}
\begin{document}


\title{A variational problem for curves on Riemann-Finsler surfaces}
\author{S. V. Sabau, K. Shibuya}

\maketitle

\begin{abstract}
We study the variational problem for $N$-parallel curves on a Finsler surface by means of Exterior Differential Systems using Griffiths' method. We obtain the conditions when these curves are extremals of a length functional and write the explicit form of Euler-Lagrange equations for this type of variational problem.
\end{abstract}

\bigskip

{ KEY WORDS: Exterior differential systems, variational problems, Riemann-Finsler surfaces}

\bigskip

2010 {\it Mathematics Subjects Classification. } Primary 49J40, Secondary 58B20.

\section{Introduction}

\quad The theory of variations is a central topic in optimization theory, mechanics, differential geometry and other fields of mathematics. In particular, the search for the extremals of arc length functionals for curves is the archetypal problem in Riemannian and Finsler geometry. 

It is well known that the extremals of the energy functional or a Riemannian or Finsler manifold are {\it geodesics} and that in fact these coincide with the extremals of the arc length functional for unit speed curves. An equivalent characterization is that a unit speed curve $\gamma$  on a Riemannian or Finsler manifold is a geodesic if and only if the tangent vector is parallel along $\gamma$ with respect to a certain connection. 

In the Riemannian case, this is equivalent to the fact that the normal vector along the unit speed curve $\gamma$ is also parallel with respect to the Levi-Civita connection, but this property do not extend to Finsler manifolds due to the dependency on direction of the Finslerian inner product.

We recall that, in a previous paper \cite{ISS2010}, we have encountered a family of curves $\gamma$ on a Finsler surface characterized by the property that the Finslerian normal vector $N$ is parallel along the curve $\gamma$ with respect to the Chern connection with reference vector $N$. We have called these curves {\it $N$-parallels} and have shown that these curves are deeply related to the Gauss-Bonnet type theorems in Finsler geometry proving in this way the importance of them. However, due to the difficulties of the problem setting and the complexity of computations, the 
study such curves on Finsler manifolds is not an easy task.

Motivated by these, we are interested in the following problem.

{\it Are the $N$-parallel curves extremals of some length functional?}

We give a first answer to this question  by studying the functional 
\begin{equation}\label{basic functional}
\mathcal L_N(\gamma):=\int_a^b \sqrt{g_N(T,T)}dt,
\end{equation}
where $T(t):=\dot\gamma(t)$ and $N(t)$ are the tangent and normal vectors along  the curve $\gamma:[a,b]\to M$, respectively. We determine the extremals of this functional for variations $\gamma:(-\varepsilon, \varepsilon)\times [a,b]\to M$, $(u,t)\mapsto \gamma(u,t)$, $\gamma(0,t)=\gamma(t)$ subject to some end points condition.

One might be tempted to consider this variational problem on the surface $M$ as in the classical variational problem for the usual arc length variation (see \cite{BCS2000}, \cite{Sh2001}). However, one can easily see that along the variation curve $\gamma(u,t)$, the variation vector field and the normal vector field both belong to the normal bundle
$$\{w\in T_\gamma M:T(t) \ 
{\rm  and} \  w(t) \ \textrm{ are linearly independent}\}$$
 and hence the treatment from the classical variational problem do not apply. Instead, we consider this variational problem "upstairs" on the indicatrix bundle $\Sigma$ by Griffiths' formalism and obtain in this way the Euler-Lagrange equations for this variational problem.

The novelty of the present research lies in the following:

\begin{enumerate}
\item We have formulated the variational problem for the functional \eqref{basic functional} and we have computed the corresponding Euler-Lagrange equations.
\item We have shown that the extremals of the 
functional \eqref{basic functional} 
form a family of curves on the base manifold $M$ that are different from the usual geodesics and have determined the conditions they coincide to the 
$N$-parallels.
 This one of the main differences between the arc length variational problem in Riemannian and Finsler settings. 
\item We have successfully used the Griffiths' formalism  for variational problems based on exterior differential systems in order to solve an intractable variational problem in the classical setting.
\end{enumerate}
 
More precisely, Griffiths' formalism in the calculus of variations is a very powerful method for a geometrical study, using exterior differential systems theory, of variational problems (classical and with constrains, first and higher order) developed in \cite{G1983}.

We consider that this is one of the best existing theories concerning the theory of variational problems, however unfortunately not yet popular enough amongst researchers. One of the first application of this method is the study of the functional $\frac{1}{2}\int {k^2}ds$ where Griffiths' formalism clarifies all the aspects of the problem (see \cite{BG1986}).

We have applied Griffiths' approach to calculus of variations to a variational problem in Finsler geometry and obtained important results concerning $N$-parallels. This is another evidence that this approach is a very powerful tool that deserves more attention. 

In the present paper we have successfully applied Griffiths' formalism in the study of a problem of Finsler geometry, but we point out that this method can be applied in the study of other variational problems in Riemannian geometry or in the general theory of calculus of variations. In future we intend to use this approach to the study of other variational problems. 

\bigskip

Here is the structure of our paper. We review the basics of Finsler surfaces in Section 2 and present the Griffiths' formalism in Section 3. In Section 4 we describe the geometry of $N$-parallel curves on a Finsler surface and express these curves as integral manifolds of an exterior differential system as well as second order differential equations. 

Section 5 contains the study of variational problem for the functional 
\eqref{basic functional}. Here we obtain the Euler-Lagrange equations for this functional (Theorem \ref{thm: N EL eq}) and we prove that these equations are necessary and sufficient conditions for the extremals of our functional (Theorem \ref{thm: nec and suffic cond}). The relation of the extremal curves with the geodesic curvature is given and this leads us to a new class of Finsler surfaces for which the $N$-parallels and extremals of the functional 
\eqref{basic functional} coincide. 

Finally we discuss the $N$-extremals for some special Finsler surfaces in Section 6. 

\bigskip

{\bf Acknowledgements.} We express our gratitude for many interesting discussions to J. Itoh and H. Shimada. We thank to N. Boonnan for reading an initial version of this paper.

\section{Riemann-Finsler surfaces}

A Finsler surface
 is a pair $(M,F)$ where $F:TM\to [0, \infty)$ is $\mathbf 
C^\infty$ on $\widetilde{TM}:=TM\backslash \{0\}$ and whose restriction to 
each tangent plane $T_xM$ 
is a Minkowski norm (see \cite{BCS2000}, \cite{Sh2001} or \cite{SS2007} for details).

A Finsler structure $(M,F)$ on a surface $M$ is equivalent to a smooth
hypersurface $\Sigma\subset TM$ for which the canonical projection
$\pi:\Sigma\to M$ is a surjective submersion and having the property
that for each $x\in M$, the $\pi$-fiber $\Sigma_x=\pi^{-1}(x)$ is
a strictly convex smooth curve including the origin $O_x\in T_xM$. 

In order to study the geometry of the surface $(M,F)$ we consider the pull-back bundle $\pi^*TM$ with base manifold $\Sigma$ and fibers 
$(T_xM)\vert_u$, where $u\in\Sigma$ such that $\pi(u) = x$ (see \cite{BCS2000}, Chapter 2 for details). 
The vector bundle $\pi^*TM$ has a distinguished global section $l:=\frac{y^i}{F(y)}\frac{\partial}{\partial x^i}$.

Using this section, one can construct a positively oriented $g$-orthonormal 
frame $\{e_1,e_2\}$ for $\pi^*TM$, by putting $e_2:=l$, where $g=g_{ij}(x,y)dx^i\otimes dx^j$ is 
the induced Riemannian metric on the fibers of $\pi^*TM$. The frame $\{u;e_1,e_2\}$ for any $u\in\Sigma$ is a globally defined $g$-orthonormal frame field 
for $\pi^*TM$ called the {\bf Berwald frame}. 

Locally, we have
\begin{equation*}
\begin{split}
e_1&:= \cfrac{1}{\sqrt g}\biggl(\cfrac{\partial F}{\partial 
y^2}\cfrac{\partial}{\partial x^1}-\cfrac{\partial F}{\partial 
y^1}\cfrac{\partial}{\partial x^2}\biggr)=m^1\cfrac{\partial}{\partial 
x^1}+m^2\cfrac{\partial}{\partial x^2}\quad ,\\
e_2&:= \cfrac{y^1}{F}\cfrac{\partial}{\partial 
x^1}+\cfrac{y^2}{F}\cfrac{\partial}{\partial 
x^2}=l^1\cfrac{\partial}{\partial 
x^1}+l^2\cfrac{\partial}{\partial x^2},
\end{split}
\end{equation*}
where $g$ is the determinant of the Hessian matrix $g_{ij}$.

The corresponding dual coframe is locally given by
\begin{equation*}
\begin{split}
\omega^1&=\cfrac{\sqrt g}{F}(y^2dx^1-y^1dx^2)=m_1dx^1+m_2dx^2\\
\omega^2&= \cfrac{\partial F}{\partial y^1}dx^1+\cfrac{\partial F}{\partial 
y^2}dx^2=l_1dx^1+l_2dx^2.
\end{split}
\end{equation*}

Next, one defines a moving coframing $(u;\omega^1,\omega^2,\omega^3)$ on $\pi^*TM$, orthonormal with respect to the
Riemannian metric on $\Sigma$ induced by the Finslerian metric $F$, where $u\in \Sigma$ and 
$\{\omega^1,\omega^2,\omega^3\}\in T^*\Sigma$. The
moving equations on this frame lead to the so-called Chern
connection. This is an almost metric compatible, torsion free connection of the
vector bundle $(\pi^*TM,\pi,\Sigma)$. 

Indeed, by a theorem of Cartan it follows that the coframe $(\omega^1,\omega^2,\omega^3)$ must satisfy the following structure equations
\begin{equation}\tag{2.2}
\begin{split}
d\omega^1&=-I\omega^1\wedge\omega^3+\omega^2\wedge\omega^3\\
d\omega^2&=-\omega^1\wedge\omega^3\\
d\omega^3&=K\omega^1\wedge\omega^2-J\omega^1\wedge\omega^3.
\end{split}
\end{equation}

The functions $I,J,K$ are smooth functions on $\Sigma$ called the {\bf invariants} of the Finsler structure $(M,F)$ in the sense of Cartan's equivalence problem (see for example \cite{BCS2000}, \cite{Br1997}).

On a Finsler manifold $(M,F)$ the metric
		\begin{equation}
		g=g_{ij}(x,y)dx^i\otimes dx^j
		\end{equation}
		is a family of inner products in each tangent space $T_xM$ viewed as a 
vector space, parameterized by rays $ty$, ($t>0$) which emanate from origin. 
This is actually a Riemannian metric on $\pi^*TM$. Moreover,
		\begin{equation}
		\hat{g}=g_{ij}(x,y)dy^i\otimes dy^j
		\end{equation}
		is a non-isotropic Riemannian metric on $T_xM$ viewed 
as differentiable manifold, which is invariant along each ray and possibly 
singular at origin.

This implies that on the vector bundle $\pi^*TM$ there exists a unique torsion-free and almost metric compatible connection $\nabla:C^\infty(T\Sigma)\otimes C^\infty(\pi^*TM)\to C^\infty(\pi^*TM)$, given by
\begin{equation}\tag{2.3}
 \nabla_{\hat{X}}Z=\{\hat{X}(z^i)+z^j\omega_j^{\ i}(\hat{X})\}e_i,
\end{equation}
where $\hat{X}$ is a vector field on $\Sigma$, $Z=z^ie_i$ is a section of $\pi^*TM$, and $\{e_i\}$ is the $g$-orthonormal frame field on $\pi^*TM$. 

The 1-forms $\omega_j^{\ i}$ define the {\bf Chern connection} of the Finsler structure $(M,F)$, where 
\begin{equation}\tag{2.4}\label{2.4}
(\omega_j^{\ i})=\left(\begin{array}{ccccc}{\omega_1{}^1} & {\omega_1{}^2}\\{\omega_2{}^1} 
&{\omega_2{}^2}\end{array}\right)=\left(\begin{array}{ccccc}{-I\omega^3} & 
{-\omega^3}\\{\omega^3} &{0}\end{array}\right),
\end{equation}
and $I:=A_{111}=A(e_1,e_1,e_1)$ is the {\bf Cartan scalar} for Finsler 
surfaces. Remark that $I=0$ is equivalent to the fact that the Finsler structure is 
Riemannian.

{\bf Remarks.}
\begin{enumerate}

\item We remark that the Chern connection gives a decomposition of the tangent bundle $T\Sigma$ by
\begin{equation*}
 T\Sigma=H\Sigma\oplus V\Sigma,
\end{equation*}
where the $H\Sigma$ is the horizontal distribution generated by $e_1,e_2$ and $V\Sigma$ is the vertical distribution generated by $\hat{e}_3$, where
$\hat{e}_1,\hat{e}_2,\hat{e}_3$ is the dual frame of the coframe $\omega^1,\omega^2,\omega^3$. 

\item For comparison, recall the structure equations of a Riemannian 
surface. They are obtained from (2.2) by putting $I=J=0$.

\item The scalar $K$ is called the {\bf Gauss curvature} of Finsler 
surface. In the case when $F$ is Riemannian, $K$ coincides with the usual 
 Gauss curvature of a Riemannian surface. 
\end{enumerate}

Differentiating again (2.2) one obtains the {\bf Bianchi identities}
\begin{equation}\tag{2.5}
\begin{split}
J&=I_2=\cfrac{1}{F}\biggl(y^1\cfrac{\delta I}{\delta x^1}+y^2\cfrac{\delta 
I}{\delta x^2}\biggr)\\
K_3&+KI+J_2=0,
\end{split}
\end{equation}
where 
$\{\frac{\delta}{\delta x^i},F\frac{\partial}{\partial y^i}\}$ is the adapted basis of $T\Sigma$, given by
\begin{equation*}
\frac{\delta}{\delta x^i}:=\frac{\partial}{\partial x^i}-N^j_i\frac{\partial}{\partial y^j}.
\end{equation*}
The functions $N_i^j$ are called the coefficients of the {\bf nonlinear connection} of $(M,F)$ (see \cite{BCS2000}, p. 33 for details).

The linear indices in $I_2$, $K_3$, $J_2$, etc. indicate differential terms with respect to 
$\omega_1,\omega_2,\omega_3$. For example 
$dK=K_1\omega^1+K_2\omega^2+K_3\omega^3$. The scalars $K_1$, $K_2$, $K_3$ 
are called the directional 
derivatives of $K$.

Nevertheless, remark that the scalars $I=I(x,y)$, $J=J(x,y)$, $K=K(x,y)$ and their derivatives 
live on $\Sigma$, not on $M$.

More generally, given any function $f:\Sigma \to \mathcal R$, one can write its 
differential in the form 
$df=f_1\omega_1+f_2\omega_2+f_3\omega_3$.

Taking one more exterior differentiation of this formula, one obtains the 
following {\bf Ricci identities}:
\begin{equation}\tag{2.6}
\begin{split}
f_{21}-f_{12} &= -Kf_3 \\
f_{32}-f_{23} &= -f_1 \\
f_{31}-f_{13} &= If_1+f_2+Jf_3.
\end{split}
\end{equation}

One defines {\bf the curvature} of the Finsler structure $(M,F)$  as usual by
\begin{equation}\tag{2.7}
\Omega_j^{\ i}=d\omega_j^{\ i}-\omega_j^{\ k}\wedge \omega_k^{\ i},
\end{equation}
where $i,j,k\in\{1,2,3\}$, and $\omega_j^{\ i}$ is the Chern connection matrix (2.4). It easily follows that the only essential entry in the matrix 
$\Omega_j^{\ i}$ is 
\begin{equation}\tag{2.8}\label{2.8}
\Omega_2^{\ 1}=d\omega_2^{\ 1}=d\omega^{3}=K\omega^1\wedge\omega^2-J\omega^1\wedge\omega^3.
\end{equation}


Recall that a Finsler surface is called {\bf Landsberg} if the invariant $J$ vanishes. Bianchi identities imply that in this case $I_2=0$ and $K_3=-KI$. A Finsler structure having $I_1=0$, $I_2=0$ is called a {\bf Berwald} surface (see \cite{BCS2000}, Lemma 10.3.1, p. 267 for details). 

It is known that Berwald surfaces are in fact Riemannian surfaces if $K\neq 0$ or locally Minkowski flat if $K=0$ (see \cite{Sz1981} and
 \cite{BCS2000}, p. 278).



\section{The variational problem in Griffiths' formulation}\label{sec: variational problem in Griffiths' formulation}

\subsection{The classical case}

We recall that if $L:\R\times TM\to \R$, $L=L(t;x,y)$ is a smooth non-autonomous Lagrangian function on the manifold $M$, then the length functional  associated to $L$ is given by 
\begin{equation}\label{Lagrangian functional}
\mathcal L:\Omega_{[a,b]}\to \R,\quad \gamma\mapsto \mathcal L(\gamma):=\int_a^b L(t;\gamma(t),\dot \gamma(t))dt,
\end{equation}
where $\Omega_{[a,b]}$ is the set of $C^k$ maps $\gamma:[a,b]\to M$.

The fundamental problem in the classical calculus of variations is to find the extremals of this functional with variations subject to some end point conditions. It is known (see for example \cite{AM2008}, \cite{MHSS2001} and many other textbooks)
that if the Lagrangian $L$ is non-degenerated, i.e. the Hessian matrix $\Bigl(\frac{\partial^2 L}{\partial y^i\partial y^j}\Bigr)$ is regular, then the extremals of $\mathcal L$ are the smooth maps  $\gamma:[a,b]\to M$ that satisfy the classical Euler-Lagrange equations
\begin{equation}
E_i(L):=\frac{\partial L}{\partial x^i}-\frac{d}{dt}\frac{\partial L}{\partial y^i}=0,\quad y^i=\frac{dx^i}{dt}.
\end{equation}

\begin{remark}
In the classical variational problem one considers fixed end points variations. A special case is the case of compactly supported variations of $\gamma$, i.e. variations $\gamma:(-\varepsilon,\varepsilon)\times (a,b)\to M$, $(u,t)\mapsto \gamma(u,t)$, $\gamma(0,t)=\gamma(t)$ such that outside a compact set $K\subset (a,b)$ the variation curve $\gamma(u,t)$ coincides with the base curve $\gamma$, i.e.
\begin{equation}
\gamma(u,t)=\gamma(0,t)=\gamma(t),\quad \forall t\in (a,b)\setminus K.
\end{equation}

The class of compact supported variations is a proper set of fixed end points variations, and therefore all the extremals of $\mathcal L$ with fixed end points variations are necessarily extremals of  $\mathcal L$ with compactly supported variations, i.e. they satisfy the same Euler-Lagrange equations, but the converse is not always true.

\setlength{\unitlength}{1cm} 
\begin{center}
\begin{picture}(9, 3)
\put(-3,1.5){\circle*{0.1}}
\put(3,1.5){\circle*{0.1}}
\put(-3,1.5){\line(1,0){6}}
\qbezier(-3,1.5)(-1.5,4)(3,1.5)
\qbezier(-3,1.5)(0,-1)(3,1.5)
\put(-3.5,1){$\gamma(a)$}
\put(2.8,1){$\gamma(b)$}
\put(-0.5,1.7){$\gamma$}
\put(-0.5,3){$\gamma_u$}
\put(5.5,1.5){\circle*{0.1}}
\put(11.5,1.5){\circle*{0.1}}
\put(5.5,1.5){\line(1,0){6}}
\put(6.5,1.5){\circle*{0.1}}
\put(10.5,1.5){\circle*{0.1}}
\qbezier(6.5,1.5)(8,4)(10.5,1.5)
\qbezier(6.5,1.5)(8,-1)(10.5,1.5)
\put(5,1){$\gamma(a)$}
\put(11,1){$\gamma(b)$}
\put(8,1.7){$\gamma_{|_K}$}
\put(8,3){$(\gamma_u)_{|_K}$}
\end{picture}
\end{center}

{\bf Figure 1.} Fixed end points variation (left) versus compactly supported variation (right).

\end{remark}

The classical variational problem can be reformulated in the language of exterior differential systems as follows (see \cite{G1983} and \cite{BG1986} for details). By putting $X:=\R\times TM=J^1(\R,M)$, that is the first order jets space on $M$, we have the canonical contact system on the manifold $X$:
\begin{equation}
\mathcal I:
\begin{cases}
\theta^1=dx^1-y^1dt\\
\theta^2=dx^2-y^2dt\\
\vdots \\
\theta^n=dx^n-y^ndt\\
\end{cases},
\end{equation}
where $(t;x^1,\dots,x^n,y^1,\dots,y^n)$ are the canonical local coordinates on the $(2n+1)$ dimensional manifold $X$. Obviously, for any $u=(t;x^1,\dots,x^n,y^1,\dots,y^n)\in X$, we have
\[
T^*_u M=\langle dt;dx^1,\dots,dx^n,dy^1,\dots,dy^n \rangle=\langle dt;\theta^1,\dots,\theta^n,dy^1,\dots,dy^n\rangle.
\]

One can see that if $\gamma:(a,b)\to M$ is a smooth mapping then there is a canonical lift to $X$ given by 
\begin{equation}
\hat\gamma:(a,b)\to X,\quad t\mapsto \hat\gamma(t)=(t;\gamma(t),\dot\gamma(t))
\end{equation}
such that $\hat\gamma$ is an integral manifold of the Pfaffian system with independence condition $(\mathcal I,\omega)$ defined on $X$, where $\mathcal I$ is the canonical contact system and $\omega=dt$, that is $\gamma^*(\mathcal I)=0$ and $\gamma^*(dt)\neq 0$. 

Moreover, we define the Lagrangian one form
\begin{equation}
\varphi:=L(t;x,y)dt,
\end{equation}
that is a well defined one form on $X$. 

Then the study of the functional \eqref{Lagrangian functional} can be reformulated as the study of the functional
\begin{equation}\label{EDS Lagrangian functional}
\mathcal L(\hat\gamma)=\int_{\hat\gamma}\varphi,
\end{equation}
where  $\hat\gamma$ is an integral manifold of the Pfaffian system with independence condition $(\mathcal I,\omega)$ defined on $X$.

The main problem of the calculus of variations reads as follows.

{\it Find the extremals of the functional \eqref{EDS Lagrangian functional} when $\hat\gamma$ varies through the set of integral manifolds of $(\mathcal I,\omega)$ of compact support.}

 These extremals project to the extremals of \eqref{Lagrangian functional} through the canonical projection $J^1(\R,M)\to M$. 

\begin{example}
Consider a Riemannian or Finslerian manifold $(M,F)$, that is $F:TM\to \R$ is an autonomous Lagrangian on the manifold $X:=J^1(\R,M)$ which is positive 1-homogeneous with respect to the variable $y$. In the Riemannian case $F(x,y)=\sqrt{g(y,y)}$, where $g$ is the Riemannian metric, and in the Finslerian case $F$ is the fundamental function. Taking the energy function of a Riemannian or Finslerian structure is also possible. 

The study of the functional 
\[
\mathcal L(\gamma)=\int_\gamma F dt
\]
is the main subject of calculus of variations on Riemannian or Finslerian manifolds (see any textbook on Riemannian or Finslerian manifolds). 

The problem can be studied "upstairs" on $TM$ or $J^1(\R,M)$ or "downstairs" directly on $M$. For the "upstairs" setting, the discussion above applies. In the "downstairs" setting $\mathcal I=(0)$. 

\end{example}

\subsection{The general case}

More generally, a {\it variational problem}  is the study of the functional
\begin{equation}\label{general functional}
\Phi:\mathcal V(\mathcal I,\omega)\to \R,\quad \Phi(\gamma_{|(a,b)})=\int_\gamma \varphi=\int_a^b \gamma^*\varphi,
\end{equation}
where $(\mathcal I,\omega)$ is a Pfaffian differential system of rank $s$ on the manifold $X$, $\gamma$ is a typical integral manifold of  
$(\mathcal I,\omega)$, i.e.
\begin{equation}
\gamma_{|(a,b)}\in \mathcal V(\mathcal I,\omega):=\{\gamma:(a,b)\to X\ :\ \gamma^*(\mathcal I)=0,\ \gamma^*(\omega)\neq 0\},
\end{equation}
and $\varphi$ is a one form on $X$ (here the curves that differ only by parameterization will be identified).

One can see that this is a natural generalization of the classical variational problem formulated in the language of exterior differential systems. The main problem of the calculus of variations is the same. {\it Describe the extremals of the functional $\Phi$, that is, determine the Euler-Lagrange equations of $\Phi$}. 

More precisely, if we denote by $T_\gamma \mathcal V(\mathcal I,\omega)$ the "tangent space" of $\mathcal V(\mathcal I,\omega)$ at $\gamma$, then we can consider the differential of \eqref{general functional}, i.e.
\begin{equation}
\delta \Phi_\gamma:T_\gamma \mathcal V(\mathcal I,\omega)\to \R,\quad \delta\Phi_\gamma(v)=\frac{d}{du}\Bigl( \int_{\gamma_u}\varphi  \Bigr)|_{u=0},
\end{equation}
where $\gamma_u\in \mathcal V(\mathcal I,\omega)$ is any compactly supported variation of $\gamma$ with $\gamma_0=\gamma$ and 
$v\in T_\gamma \mathcal V(\mathcal I,\omega)$ is the associated infinitesimal variation vector field defined along $\gamma$ corresponding to the variation $u\mapsto \gamma_u$. 

With these notations, the Euler-Lagrange equations of $\Phi$ are
\begin{equation}
\delta\Phi_\gamma(v)=0, \quad \forall v\in T_\gamma\mathcal V(\mathcal I,\omega).
\end{equation}

Integral curves $\gamma$ satisfying these equations are called the extremals of $\Phi$. 

We make a small digression here to point out that  the "tangent space" $ T_\gamma\mathcal V(\mathcal I,\omega)$, that is the space of smooth variation vector fields of $\gamma$, can be described to first order by the variational equations of an integral curve of the Pfaffian system $(\mathcal I,\omega)$ (see \cite{G1983}, \cite{H1992} for details). The variational equations of the integral curves of $(\mathcal I,\omega)$ are given by
\begin{equation}
\mathcal D_\gamma(v)=0, \qquad v\in T_{\gamma(t)} X\setminus \langle \gamma'(t)\rangle,
\end{equation}
where
\begin{equation}
\mathcal D_\gamma(v):=e_\alpha\otimes (v \innerprod d\theta^\alpha+d(v\innerprod \theta^\alpha))|_{\gamma}.
\end{equation}
Here $\{\theta^1,\theta^2,\dots,\theta^s\}$ is a local basis for  $\mathcal I$, and 
$\{e_1,e_2,\dots,e_s\}$ its dual frame field along $\gamma$. Locally, the tangent space $ T_\gamma\mathcal V(\mathcal I,\omega)$ is described by the equations
\begin{equation}
(v \innerprod d\theta^\alpha+d(v\innerprod \theta^\alpha)|_{\gamma}=0,\quad \alpha=1,2,\dots,s.
\end{equation}

We return to the Euler-Lagrange equations of the variational problem  $(\mathcal I, \omega;\varphi)$ on the manifold $X$. 
In the case $\mathcal I=(0)$, the characterization of extremals is fairy simple

\begin{proposition}{\rm (\cite{AM2008}, \cite{Br1987})}\label{prop:I=0 EL eq}
If $\gamma:(a,b)\to X$ is an extremal of the functional $\gamma\mapsto \int_\gamma \varphi$ with compactly supported variations, then for all $t\in (a,b)$, we have
\begin{equation}\label{I=0 EL eq}
\gamma'(t)\innerprod d\varphi |_{\gamma(t)}=0.
\end{equation}

Conversely, if $\gamma$ satisfies this condition, then $\gamma:(a,b)\to X$ is an extremal.

\end{proposition}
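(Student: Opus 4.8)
The plan is to derive the first variation formula for $\Phi(\gamma)=\int_\gamma\varphi$ and then read off the stated condition from it. First I would take a compactly supported variation $\gamma_u$ of $\gamma$ with $\gamma_0=\gamma$ and assemble it into one smooth map $\Gamma:(-\ve,\ve)\times(a,b)\to X$, $(u,t)\mapsto\gamma_u(t)$. Writing $V:=\Gamma_*\partial_u$ and $T:=\Gamma_*\partial_t$ for the variation field and the velocity field along $\Gamma$ (so that $V|_{u=0}=v$ and $T|_{u=0}=\gamma'$), I would pull $\varphi$ back by $\Gamma$ and express it on the parameter rectangle as
\begin{equation*}
\Gamma^*\varphi=A\,du+B\,dt,\qquad A=\varphi(V),\ B=\varphi(T).
\end{equation*}
Then $\Phi(\gamma_u)=\int_a^b B(u,t)\,dt$, and differentiating under the integral sign gives $\delta\Phi_\gamma(v)=\int_a^b\partial_u B\,dt$ at $u=0$.

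The key step is to trade $\partial_u B$ for a term involving $d\varphi$ plus a total $t$-derivative. Computing $d(\Gamma^*\varphi)$ in the two ways $d(\Gamma^*\varphi)=\Gamma^*d\varphi$ and $d(A\,du+B\,dt)=(\partial_u B-\partial_t A)\,du\wedge dt$, and evaluating both on $(\partial_u,\partial_t)$, I obtain
\begin{equation*}
\partial_u B-\partial_t A=d\varphi(V,T).
\end{equation*}
Integrating in $t$ and using that the variation is compactly supported---so that $V$, hence $A=\varphi(V)$, vanishes near $t=a$ and $t=b$ and the boundary term $[A]_a^b$ drops out---yields, at $u=0$, the first variation formula
\begin{equation*}
\delta\Phi_\gamma(v)=\int_a^b d\varphi(v,\gamma')\,dt=-\int_a^b\bigl(\gamma'\innerprod d\varphi\bigr)(v)\,dt.
\end{equation*}
This is just Cartan's formula $\mathcal L_V\varphi=V\innerprod d\varphi+d(V\innerprod\varphi)$ in disguise, the exact term $d(V\innerprod\varphi)$ being the boundary contribution that the compact support kills.

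From here both directions are immediate. For the converse, if $\gamma'(t)\innerprod d\varphi|_{\gamma(t)}=0$ for all $t$ then the integrand vanishes identically, so $\delta\Phi_\gamma(v)=0$ for every $v$ and $\gamma$ is an extremal. For the direct implication I would invoke the fundamental lemma of the calculus of variations: since $\mathcal I=(0)$ imposes no constraint, $v$ ranges over \emph{all} compactly supported vector fields along $\gamma$; writing $\gamma'\innerprod d\varphi=c_i(t)\,dx^i$ in a local coframe and $v=v^i\partial_i$, the vanishing of $\int_a^b c_i v^i\,dt$ for arbitrary compactly supported $v^i$ forces each $c_i\equiv0$, i.e. $\gamma'\innerprod d\varphi=0$ along $\gamma$. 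I expect the only delicate points to be the justification of differentiation under the integral sign (routine, from smoothness and compact support of $\Gamma$) and the observation that the admissible $v$ are genuinely unconstrained here precisely because $\mathcal I=(0)$---this is exactly what makes the fundamental lemma applicable, and is the feature that will fail once a nontrivial Pfaffian system is reintroduced in the general setting.
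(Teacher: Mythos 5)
Your proof is correct. The paper itself gives no proof of this proposition --- it is quoted with citations to \cite{AM2008} and \cite{Br1987} --- and your argument (computing the first variation by comparing $d(\Gamma^*\varphi)=\Gamma^*d\varphi$ on the parameter rectangle, killing the boundary term $[\varphi(V)]_a^b$ by compact support, and then applying the fundamental lemma of the calculus of variations, with the correct observation that $v$ is unconstrained precisely because $\mathcal I=(0)$) is exactly the standard derivation found in those references.
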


Remark that the left hand side of the formula above is a one form on $X$ defined along $\gamma$. 

The form \eqref{I=0 EL eq} of the Euler-Lagrange equations allows to associate another Pfaffian system $(J,\omega)$ to the variational problem  $(\mathcal I=(0), \omega;\varphi)$. Indeed, one can see that actually the Euler-Lagrange equations  \eqref{I=0 EL eq} give the integral manifolds of the Cartan system of the two form $\Psi:=d\varphi$ on $X$, namely
\begin{equation}
\mathcal C(\Psi):=\{v\innerprod d\varphi\ :\ v\textrm{ is any smooth compactly supported vector field on }X\}.
\end{equation}

It follows that the extremals of $(\mathcal I=(0), \omega;\varphi)$ are the characteristic curves of the two form $\Phi$ on $X$, that is $\gamma:(a,b)\to X$ satisfies the conditions 
\begin{enumerate}
\item $\gamma'(t) \innerprod  \Psi_{\gamma(t)}=0,\qquad \forall t\in (a,b)$;
\item $\gamma^*(\omega)\neq 0$.
\end{enumerate}

The Pfaffian system $(J,\omega):=(\mathcal C(\Psi),\omega)$ is called {\bf the Euler-Lagrange system} of the variational problem $(\mathcal I=(0), \omega;\varphi)$ on $X$ (see \cite{G1983}, \cite{BG1986}).

The characterization of the Euler-Lagrange equations by the Euler-Lagrange system can be extended to variational problems $(\mathcal I, \omega;\varphi)$ on $X$, when $\textrm{ rank } \mathcal I>0$ (see \cite{G1983}, \cite{BG1986}).

For the variational problem $(\mathcal I, \omega;\varphi)$, $\textrm{ rank }\mathcal I=s>0$, on a manifold $X$, one constructs the affine sub bundle
\begin{equation}
Z:=\mathcal I+\varphi\subset T^*X,
\end{equation}
that is $Z_x=\mathcal I_x+\varphi_x$ is an affine subspace of $T^*_xX$, for any $x\in X$. 

Locally $Z_U\simeq U\times \R^s$, where $U\subset X$ is an open set, in other words, we identify the pair $(x,\lambda)\in U\times \R^s$ with the one form
\begin{equation}
\psi_x:=\varphi_x+\sum_{\alpha=1}^s \lambda_i\theta^i_x\in T_x^*X,
\end{equation}
 where $\{\theta^1,\theta^2,\dots,\theta^s\}$ is a local basis for  $\mathcal I$ over $U$. By this identification it results that  $\psi$ is the canonical one form on $Z$ obtained by the restriction of the 
 canonical one form on $T^*X$ to $Z$. The reason d'etre of this construction lies in the following fundamental result.

\begin{theorem}{\rm (\cite{Br1987})}\\
Let $(\mathcal I, \omega;\varphi)$ be a variational problem on $X$ and let 
$(\widetilde{I}=(0), \omega;\psi)$, be the associated variational problem on $Z$ constructed above. Then the canonical projection $\pi:Z\subset TX\to X$ maps extremals of $(\widetilde{I}=(0), \omega;\psi)$ to the extremals of $(\mathcal I, \omega;\varphi)$.
\end{theorem}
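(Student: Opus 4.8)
The plan is to reduce everything to the already-established characterization of extremals for unconstrained problems (Proposition \ref{prop:I=0 EL eq}) and then to compare the first variation upstairs on $Z$ with the first variation downstairs on $X$. Throughout I write $\psi=\varphi+\sum_{\alpha=1}^{s}\lambda_\alpha\theta^\alpha$ for the canonical one-form on $Z$ in the fibre coordinates $\lambda_\alpha$, and I record at once that
\[
d\psi=d\varphi+\sum_{\alpha=1}^{s}d\lambda_\alpha\wedge\theta^\alpha+\sum_{\alpha=1}^{s}\lambda_\alpha\,d\theta^\alpha,
\]
where $\varphi$, $\theta^\alpha$ and their exterior derivatives are horizontal, i.e.\ pulled back from $X$.

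First I would establish that the projection is integrable. Let $\Gamma:(a,b)\to Z$ be an extremal of $(\widetilde I=(0),\omega;\psi)$ and put $\gamma=\pi\circ\Gamma$. By Proposition \ref{prop:I=0 EL eq}, $\Gamma$ satisfies $\Gamma'\innerprod d\psi=0$. Contracting the displayed expression for $d\psi$ with the vertical fields $\partial/\partial\lambda_\alpha$ gives $\partial/\partial\lambda_\alpha\innerprod d\psi=\theta^\alpha$, so evaluating $\Gamma'\innerprod d\psi=0$ on $\partial/\partial\lambda_\alpha$ yields $\theta^\alpha(\Gamma')=\theta^\alpha(\gamma')=0$ for every $\alpha$. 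Hence $\gamma^*\mathcal I=0$; and since $\omega$ is horizontal, $\gamma^*\omega=\Gamma^*\omega\neq0$. Thus $\gamma\in\mathcal V(\mathcal I,\omega)$ is a genuine integral curve downstairs, so it makes sense to ask whether it is an extremal.

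The core step is the comparison of first variations. Given any admissible variation field $V\in T_\gamma\mathcal V(\mathcal I,\omega)$ --- that is, $V$ solving the linearized constraint $(V\innerprod d\theta^\alpha+d(V\innerprod\theta^\alpha))|_\gamma=0$ --- I would lift it to a field $\widetilde V$ along $\Gamma$ with $\pi_*\widetilde V=V$ and arbitrary vertical part. The first variation formula $\delta\Phi_{Z,\Gamma}(\widetilde V)=\int_\Gamma\widetilde V\innerprod d\psi$ (from Cartan's identity $\mathcal L_{\widetilde V}\psi=\widetilde V\innerprod d\psi+d(\widetilde V\innerprod\psi)$ and vanishing of the boundary term under compact support) together with $\Gamma'\innerprod d\psi=0$ shows the left-hand side vanishes. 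On the other hand, expanding $\int_\Gamma\widetilde V\innerprod d\psi$ using the formula for $d\psi$ and writing $f^\alpha(t):=(V\innerprod\theta^\alpha)|_{\gamma(t)}$, the $d\varphi$-term reproduces $\delta\Phi_\gamma(V)=\int_\gamma V\innerprod d\varphi$, while the two remaining groups of terms combine --- after using the integrability $\theta^\alpha(\gamma')=0$ and substituting $d\theta^\alpha(V,\gamma')=-\dot f^\alpha$ from the linearized constraint --- into $-\sum_\alpha\frac{d}{dt}(\lambda_\alpha f^\alpha)$. Integrating this total derivative and invoking compact support ($f^\alpha$ vanishes near the endpoints) kills it, leaving $\delta\Phi_{Z,\Gamma}(\widetilde V)=\delta\Phi_\gamma(V)$. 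Hence $\delta\Phi_\gamma(V)=0$ for every admissible $V$, so $\gamma$ is an extremal of $(\mathcal I,\omega;\varphi)$.

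I expect the main obstacle to be the bookkeeping in this last comparison: one must verify that exactly those contractions involving $\theta^\alpha(\Gamma')$ drop out by integrability, and that the surviving $d\lambda_\alpha\wedge\theta^\alpha$ and $\lambda_\alpha\,d\theta^\alpha$ contributions assemble --- via the linearized constraint, which here plays the role of an integration-by-parts identity --- into a single total derivative. The role of the multipliers $\lambda_\alpha$ is precisely to absorb the Pfaffian constraint, so that an \emph{unconstrained} stationarity condition upstairs on $Z$ encodes the \emph{constrained} one downstairs on $X$; making this cancellation transparent, rather than the individual contractions, is the crux of the argument.
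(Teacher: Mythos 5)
The paper offers no internal proof to compare yours against: the theorem is stated as a quotation of Bryant's result, and its proof is delegated entirely to \cite{Br1987}. Judged on its own merits, your proposal is correct, and it is essentially the standard Lagrange-multiplier argument that underlies the cited result. Both key steps check out. First, contracting the extremal condition $\Gamma'\innerprod d\psi=0$ (which is exactly Proposition \ref{prop:I=0 EL eq} applied on $Z$, where the constraint system is $(0)$) with the vertical fields $\partial/\partial\lambda_\alpha$ gives $\theta^\alpha(\Gamma')=\theta^\alpha(\gamma')=0$, so the projection $\gamma=\pi\circ\Gamma$ is automatically an integral curve of $(\mathcal I,\omega)$; this is where the affine-bundle structure of $Z=\mathcal I+\varphi$ does its work. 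Second, for a compactly supported $V$ solving the linearized constraint, with $f^\alpha=\theta^\alpha(V)$, any lift $\widetilde V$ of $V$ satisfies the pointwise identity
\begin{equation*}
0=d\psi\bigl(\widetilde V,\Gamma'\bigr)=d\varphi\bigl(V,\gamma'\bigr)-\sum_{\alpha=1}^{s}\frac{d}{dt}\bigl(\lambda_\alpha f^\alpha\bigr),
\end{equation*}
the vertical part of $\widetilde V$ dropping out precisely because $\theta^\alpha(\Gamma')=0$; integration and compact support then give $\delta\Phi_\gamma(V)=\int_\gamma V\innerprod d\varphi=0$. Two points that your sketch leans on should be stated explicitly to make it fully rigorous: (i) the infinitesimal variation field of any genuine compactly supported admissible variation of $\gamma$ does satisfy the linearized constraint (differentiate $\gamma_u^*\theta^\alpha=0$ in $u$ at $u=0$), which is what licenses the substitution $d\theta^\alpha(V,\gamma')=-\dot f^\alpha$; and (ii) the identification $\delta\Phi_\gamma(V)=\int_\gamma V\innerprod d\varphi$ uses Cartan's formula together with the vanishing of the boundary term $\varphi(V)$ at the endpoints. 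Finally, note that your argument proves exactly the asserted direction --- extremals upstairs project to extremals downstairs --- and nothing more, which is consistent with the paper's Remark \ref{Problems} that the converse (necessity of the Euler--Lagrange system when $\mathrm{rank}\,\mathcal I>0$) can fail without a regularity, i.e.\ bracket-generating, hypothesis.
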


In other words, the Euler-Lagrange equations for the variational problem $(\mathcal I, \omega;\varphi)$ on $X$ can be defined as the Euler-Lagrange equations of the associated variational problem $(\widetilde{I}=(0), \omega;\psi)$ on $Z$ obtained as in Proposition \ref{prop:I=0 EL eq}, where $\omega$ on $Z$ is obtained by the pullback of $\omega$ on $X$ by the canonical projection.

For this reason the Pfaffian system $(\mathcal J:=\mathcal C(\Psi),\omega)$, typically restricted to a sub manifold 
  $Y\subset Z$, is called {\bf the Euler-Lagrange system} associated to the variational problem $(\mathcal I, \omega;\varphi)$ on $X$. Any variational problem for curves can be formulated in this setting. 

\begin{remark}\label{Problems}
However, one must pay attention to the following problems when working in this formalism.
\begin{enumerate}
\item In general the differential system $(\mathcal C(\Psi),\omega)$ is not a Pfaffian system with independence condition on $Z$, so we need to construct the involutive prolongation of $(\mathcal C(\Psi),\omega)$ on a  a sub manifold 
  $Y\subset Z$ (see \cite{G1983}, \cite{BG1986} for details).
\item The Euler-Lagrange equations for the variational problem $(\mathcal I, \omega;\varphi)$, that we describe above, are sufficiently conditions for $\gamma$ to be extremals. However, in the case $\textrm{ rank } \mathcal I>0$ these are not always necessary conditions. Indeed, it is known that any regular extremal curve of $\Phi$ is a solution of the Euler-Lagrange equations (\cite{H1992}), where $\gamma$ regular means that it is a generic integral curve of a bracket generating differential system. Recall that a distribution $D=\mathcal I^\perp\subset TX$ is bracket generating if and only if for any $v\in T_xM$ there exist the vector fields $X_1,X_2,\dots,X_k\in D_x\subset T_xX$ such that
\begin{equation}
[X_1,[X_2,\dots,[X_{k-1},X_k],\dots ]_x=v,
\end{equation}
where $x$ is a generic point in $X$.
\end{enumerate}
\end{remark}

\section{The normal lift of a curve}

\quad In this section we recall the normal lift of a curve from \cite{ISS2010}.

Let us consider a smooth (or piecewise $C^\infty$) curve $\gamma:[0,r]\to M$ with the tangent vector $\dot{\gamma}(t)=T(t)$, parameterized such that $F(\gamma(t),\dot{\gamma}(t))=1$.

We construct the  normal vector field $N$ along $\gamma$, i.e. 
\begin{equation}\label{orthog_conditions}
\begin{split}
& g_N(N,N)=1\\
& g_N(N,T)=0\\
& g_N(T,T)=\sigma^2(t),
\end{split}
\end{equation}
where $\sigma(t)$ is a scalar function non constant along $\gamma$. 

This lead us to the {\bf normal lift} $\hat\gamma^\perp$ of $\gamma$ to $\Sigma$ defined by
\begin{equation}
\begin{split}
 \hat\gamma^\perp:[0,&r]\to\Sigma\\
& t\mapsto \hat\gamma^\perp(t)=(\gamma(t),N(t)).
\end{split}
\end{equation}

The tangent vector to the normal lift is therefore
\begin{equation}
\begin{split}
\hat{T}^\perp(t) & = \hat{\dot{\gamma}}^\perp(t)=\frac{d}{dt}\hat{\gamma}^\perp(t)=\dot{\gamma}^i(t)\frac{\partial}{\partial x^i}_{\vert_{(x,N)}}+\frac{d}{dt}N^i(t)\frac{\partial}{\partial y^i}_{\vert_{(x,N)}} \\
& =T^i(t)\frac{\delta}{\delta x^i}_{\vert_{(x,N)}}+(D_T^{(N)}N)^i\frac{\partial}{\partial y^i}_{\vert_{(x,N)}},
\end{split}
\end{equation}
where
\begin{equation}
D_T^{(N)}U=(D_T^{(N)} U)^i\cdot\frac{\partial}{\partial x^i}_{\vert_{\gamma(t)}}=\Bigl[\frac{dU^i}{dt}+T^jU^k\Gamma^i_{jk}(x,N) \Bigr]\cdot\frac{\partial}{\partial x^i}_{\vert _{\gamma(t)}},
\end{equation}
for any $U=U^i(x)\frac{\partial}{\partial x^i}$ vector field along $\gamma$, where $\Gamma^i_{jk}$ are the Chern connection coefficients, i.e. $\omega_i^j=\Gamma_{ik}^jdx^k$, that is the covariant derivative along $\gamma$ with reference vector $N$.

It follows 
\begin{equation}
\begin{split}
& g_N(D_T^{(N)}N,N)=0,\\
& g_N(D_T^{(N)}T,N)+g_N(T, D_T^{(N)}N)=0,\\
& g_N(D_T^{(N)}T,T)=
\sigma(t)\frac{d\sigma}{dt}-A^{(N)}(T,T,D_T^{(N)}N).
\end{split}
\end{equation}

Since $\{N,T\}$ is a basis of $T_\gamma M$, we also obtain
\begin{equation}
D_T^{(N)}T=k_{T}^{(N)}(t)N+B(t)T,
\end{equation}
where
 \begin{equation}
B(t)=\frac{1}{\sigma(t)}\frac{d\sigma(t)}{dt}-\frac{1}{\sigma^2(t)}A^{(N)}(T,T,D_T^{(N)}N).
\end{equation}

We can define the notion of $N$-parallel of a Finsler structure.
\begin{definition}
A curve $\gamma$ on the surface $M$, in Finslerian natural parameterization,
 is called an
{\bf N-parallel} of the Finslerian structure $(M,F)$ if and only if the normal vector field is parallel along $\gamma$, namely we have
\begin{equation}
D_T^{(N)}N=0.
\end{equation}
\end{definition}

It follows that the normal lift of an  $N$-parallel curve $\gamma$
on $M$ is given by 
$\hat{T}^\perp=T^i\frac{\delta}{\delta x^i}_{\vert_{(x,N)}}$. 

\begin{remark}
\begin{enumerate}
\item If $\gamma$ is an  $N$-parallel, then we have
\begin{equation}
\begin{split}
& g_N(D_T^{(N)}T,N)=0\\
& g_N(D_T^{(N)}T,T)=
\sigma(t)\frac{d\sigma}{dt}.
\end{split}
\end{equation}
\item The curve $\gamma$ is an  $N$-parallel if and only if 
$\nabla_{\hat{T}^\perp}l=D_T^{(N)}N=0$. This implies that
      $g_N(\nabla_{\hat{T}^\perp}l,l)=0$, i.e. $\nabla_{\hat{T}^\perp}l$
      is orthogonal to the indicatrix.
\end{enumerate}
\end{remark}

In case of an arbitrary curve $\gamma$ on $M$, from 
\begin{equation*}
g_N(D_T^{(N)}N,N)=0, \qquad g_N(T,N)=0,
\end{equation*}
it follows that the vector $D_T^{(N)}N$ is proportional to $T$,
i.e. there exists a non-vanishing function $k_{T}^{(N)}(t)$ such that
\begin{equation}
D_T^{(N)}N=-\frac{k_{T}^{(N)}(t)}{\sigma^2(t)}T,\qquad \sigma(t)\neq 0.
\end{equation}

The function $k_{T}^{(N)}(t)$ will be called the {\bf $N$-parallel curvature
of $\gamma$}. The minus sign is put only in order to obtain the
same formulas as in the classical theory of Riemannian manifolds.

In other words, we have
\begin{equation}
g_N(D_T^{(N)}N,T)=-g_N(D_T^{(N)}T,N)=-k_{T}^{(N)}(t).
\end{equation}

Since $\{N,T\}$ is a basis, we also obtain
\begin{equation}
D_T^{(N)}T=k_{T}^{(N)}(t)N+B(t)T,
\end{equation}
where we put 
 \begin{equation}
B(t)=\frac{1}{\sigma(t)}\frac{d\sigma(t)}{dt}-\frac{1}{\sigma^2(t)}A_{\vert_{(x,N)}}(T,T,D_T^{(N)}N).
\end{equation}

\begin{corollary}
A curve $\gamma$ on $M$ is $N$-parallel if and only if its $N$-parallel curvature $k_{T}^{(N)}$ vanishes.
\end{corollary}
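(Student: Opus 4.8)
The plan is to read off both implications directly from the structural identity
\begin{equation*}
D_T^{(N)}N=-\frac{k_{T}^{(N)}(t)}{\sigma^2(t)}T,\qquad \sigma(t)\neq 0,
\end{equation*}
which was already established for an arbitrary curve $\gamma$ immediately preceding this statement, together with the \emph{definition} of $N$-parallel, namely the condition $D_T^{(N)}N=0$. Since the whole content of the corollary is the logical equivalence of these two vanishing conditions, no new computation is needed; the work is purely to verify that the scalar coefficient on the right-hand side vanishes precisely when $D_T^{(N)}N$ does.

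First I would record the two ingredients that make the coefficient $k_T^{(N)}/\sigma^2$ a genuine proxy for $D_T^{(N)}N$: the curve is parameterized so that $F(\gamma,\dot\gamma)=1$, hence the tangent vector $T=\dot\gamma$ is nowhere zero, and the normalizing scalar satisfies $\sigma(t)\neq 0$ as stipulated in the orthogonality conditions \eqref{orthog_conditions}. With $T\neq 0$ and $\sigma\neq 0$, the vector $-\frac{k_{T}^{(N)}(t)}{\sigma^2(t)}T$ vanishes if and only if the scalar factor $k_{T}^{(N)}(t)$ vanishes.

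For the forward direction, I would assume $\gamma$ is an $N$-parallel, so by definition $D_T^{(N)}N=0$; substituting into the identity above and using $T\neq 0$, $\sigma\neq 0$ forces $k_{T}^{(N)}(t)=0$ for all $t$. For the converse, assuming $k_{T}^{(N)}\equiv 0$ makes the right-hand side of the identity identically zero, whence $D_T^{(N)}N=0$, which is exactly the defining condition for $\gamma$ to be an $N$-parallel. This completes the equivalence.

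There is no substantive obstacle here: the corollary is an immediate bookkeeping consequence of the representation formula for $D_T^{(N)}N$ derived earlier. The only point requiring a word of care — and the closest thing to a ``step to watch'' — is confirming the non-degeneracy $T\neq 0$ and $\sigma\neq 0$, so that vanishing of the vector and vanishing of the scalar curvature $k_T^{(N)}$ are truly equivalent rather than merely one implying the other.
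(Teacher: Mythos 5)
Your proposal is correct and follows exactly the route the paper intends: the corollary is stated without proof precisely because it is an immediate consequence of the identity $D_T^{(N)}N=-\frac{k_{T}^{(N)}(t)}{\sigma^2(t)}T$ together with the definition $D_T^{(N)}N=0$ of an $N$-parallel, which is what you use. Your added care about $T\neq 0$ and $\sigma\neq 0$ is the right (and only) point needing verification, so nothing is missing.
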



By making use of the cotangent map of $\hat{\gamma}^{\perp}$ we compute
\begin{equation}
\begin{split}
& \hat{\gamma}^{\perp *}\omega^1\frac{\partial}{\partial
 t}=\omega^1(\hat{T}^\perp)_{\vert_{(x,N)}}
=\frac{\sqrt{g}}{F}(N^2T^1-T^2N^1)=\sigma(t)\\
& \hat{\gamma}^{\perp *}\omega^2\frac{\partial}{\partial
 t}=\omega^2(\hat{T}^\perp)_{\vert_{(x,N)}}=g_N(N,T)=0\\
& \hat{\gamma}^{\perp *}\omega^3\frac{\partial}{\partial
 t}=\omega^3(\hat{T}^\perp)_{\vert_{(x,N)}}
=\frac{\sqrt{g}}{F}
\Bigl[ N^2(D_T^{(N)}N)^1-N^1(D_T^{(N)}N)^2
\Bigr]=-\frac{k_{T}^{(N)}(t)}{\sigma(t)}.
\end{split}
\end{equation}

Therefore we obtain
\begin{equation}\label{3.14}
\begin{split}
& \hat{\gamma}^{\perp *}\omega ^1=\sigma(t)dt \\
& \hat{\gamma}^{\perp *}\omega^2=0 \\
& \hat{\gamma}^{\perp *}\omega^3=-\frac{k^{(N)}_{T}}{\sigma(t)}dt.
\end{split}
\end{equation}

If we denote by $\{\hat e_1, \hat e_2, \hat e_3\}$ the dual frame 
of the orthonormal coframe $\{\omega^1, \omega^2, \omega^3\}$, then we obtain that the tangent vector to the normal
lift of $\hat{\gamma}^{\perp}$ is 
\begin{equation}\tag{3.15}\label{3.15}
\hat{T}^\perp=\sigma(t)\hat{e}_1-\frac{k^{(N)}_{T}}{\sigma(t)}\ \hat{e}_3\in<\hat{e}_1,\hat{e}_3>,
\end{equation}
where $<\hat{e}_1,\hat{e}_3>$ is the 2-plane generated by $\hat{e}_1$, $\hat{e}_3$.
\bigskip


Remark that in the case when $\gamma$ is an $N$-parallel, we have
\begin{equation}\label{3.16}
D_T^{(N)}T=\frac{1}{\sigma(t)}\frac{d\sigma(t)}{dt}T,
\end{equation}
and
\begin{equation}
\begin{split}
& \hat{\gamma}^{\perp *} \omega ^1=\sigma(t)dt \\
& \hat{\gamma}^{\perp *}\omega^2=0 \\
& \hat{\gamma}^{\perp *}\omega^3=0.
\end{split}
\end{equation}

 Finally, we remark that the tangent vector to the normal
lift of an $N$-parallel is 
\begin{equation}
\hat{T}^\perp=\sigma(t)\hat{e}_1.
\end{equation}

\begin{remark}

We can rewrite $\eqref{3.16}$ as 
\begin{equation}\label{7.3}
\frac{d^2\gamma^i}{dt^2} + \Gamma^i_{jk}\big(\gamma(t), N(t)\big)\frac{d\gamma^j}{dt}\frac{d\gamma^k}{dt} = 
\frac{d}{dt}\Bigl[ \log \sigma(t) \Bigr] \frac{d\gamma^i}{dt},
\end{equation}
where $N(t) = N\big(\gamma(t), \dot\gamma(t)\big)$ from \eqref{orthog_conditions}.

An initial condition can be given by
\begin{equation}\label{7.4}
\begin{split}
\gamma^i(t_0) = 0,\\
\dot\gamma^i(t_0) = T_0^i,
\end{split}
\end{equation}
with $i = 1, 2$ and corresponding the normal initial condition
\begin{equation}
\begin{split}
\gamma^i(t_0) = 0,\\
N^i(t_0) = N_0,
\end{split}
\end{equation}
where $N_0$ are given as solutions of $\eqref{orthog_conditions}$ for $T = T_0$.

Then, by a similar argument as in the case of geodesics, we know from the general theory of ODEs that $\eqref{7.3}$ with initial conditions $\eqref{7.4}$ have unique solutions.

\end{remark}



\section{Variational problem for the $N$-lift}


We will formulate a variational problem in Griffiths' formalism for our setting by specifying the manifold $X$ and the Pfaffian with independence condition $(\mathcal I,\omega)$ by means of the 3-manifold $\Sigma$ with the coframe $\{\omega^1,\omega^2,\omega^3\}$ generated by the Finsler surface $(M,F)$.

First, remark that equations \eqref{3.14} imply
\begin{proposition}
Let $\gamma:[a,b]\to M$ be a smooth curve on $M$. 
Then its normal lift $\hat{\gamma}^\perp$ to $\Sigma$ 
is an integral manifold of the Pfaffian system with independence condition $(\mathcal I,\omega^1)$ on the manifold $\Sigma$, where $\mathcal I=\{\omega^2\}$.
\end{proposition}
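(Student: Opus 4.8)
The statement is an immediate consequence of the cotangent-map computation already recorded in \eqref{3.14}, so the plan is mainly to unpack the definition of an integral manifold of a Pfaffian system with independence condition and to check its two defining conditions. Recall from the general setup that $\hat\gamma^\perp$ is an integral manifold of $(\mathcal I,\omega^1)$ with $\mathcal I=\{\omega^2\}$ precisely when $(\hat\gamma^\perp)^*(\omega^2)=0$ (the pullback of the generator of $\mathcal I$ vanishes) and $(\hat\gamma^\perp)^*(\omega^1)\neq 0$ (the independence condition). Since $\hat\gamma^\perp$ is a curve, there is no issue with the full differential ideal: the exterior derivative of $\omega^2$ is a $2$-form and pulls back automatically to zero on a one-dimensional source, so only the algebraic condition on $\omega^2$ must be verified.

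First I would verify the integrality condition $(\hat\gamma^\perp)^*\omega^2=0$. This is exactly the second line of \eqref{3.14}, obtained by evaluating $\omega^2$ on the tangent vector $\hat T^\perp$ to the normal lift and using the orthogonality relation $g_N(N,T)=0$ from \eqref{orthog_conditions}. Because $\mathcal I$ is generated by the single form $\omega^2$, this already shows that $\hat\gamma^\perp$ annihilates $\mathcal I$.

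Next I would check the independence condition $(\hat\gamma^\perp)^*\omega^1\neq 0$. By the first line of \eqref{3.14} we have $(\hat\gamma^\perp)^*\omega^1=\sigma(t)\,dt$, so it suffices to argue that $\sigma(t)\neq 0$. This is the only place where the regularity of $\gamma$ really enters: since $\gamma$ is a smooth curve in Finslerian natural parameterization, its tangent $T=\dot\gamma$ is nowhere zero, and the third relation in \eqref{orthog_conditions} gives $g_N(T,T)=\sigma^2(t)$. Positive-definiteness of the Finslerian inner product $g_N$ then forces $\sigma^2(t)>0$, hence $\sigma(t)\neq 0$ and $(\hat\gamma^\perp)^*\omega^1=\sigma(t)\,dt\neq 0$.

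The only real point requiring care is thus the non-vanishing of $\sigma$, that is, the independence condition; the integrality condition is automatic from \eqref{3.14}. I do not expect a genuine obstacle here beyond this positivity argument, since the substantive computation of the pullbacks $(\hat\gamma^\perp)^*\omega^i$ has already been carried out in deriving \eqref{3.14}. Combining the two verifications yields that $\hat\gamma^\perp\in\mathcal V(\mathcal I,\omega^1)$, which is the claim.
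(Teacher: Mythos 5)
Your proposal is correct and follows exactly the paper's route: the paper itself states the proposition as an immediate consequence of equations \eqref{3.14}, which is precisely the pullback computation you invoke for the integrality condition $(\hat\gamma^\perp)^*\omega^2=0$ and the independence condition $(\hat\gamma^\perp)^*\omega^1=\sigma(t)\,dt\neq 0$. Your explicit justification that $\sigma(t)\neq 0$ (from $F(\gamma,\dot\gamma)=1$ forcing $T\neq 0$ and positive-definiteness of $g_N$) is a detail the paper leaves implicit, and is a welcome addition rather than a deviation.
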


Clearly the projection to $M$ of any integral curve of $(\mathcal I,\omega^1)$ is a curve on $M$.

We will consider the variational problem in Griffiths' formalism 
 $(\mathcal I,\omega^1;\varphi)$ on the manifold $\Sigma$, with
\begin{equation}
\varphi=\omega^1.
\end{equation}

More precisely, we consider the functional 
\begin{equation}\label{N-var functional}
\Phi:\mathcal V(\mathcal I,\omega^1)\to \R,\quad \Phi(\hat\gamma^\perp)=\int_{\hat\gamma^\perp} \omega^1,
\end{equation}
where $\hat\gamma^\perp:(a,b)\to \Sigma$ is a typical integral manifold of the rank one Pfaffian system $(\mathcal I,\omega^1)$.

The extremals of this functional are called the $N$-{\it extremals} of the Finsler surface $(M,F)$.











In order to compute the Euler-Lagrange equations of this variational problem, we follow Griffiths' recipe in Section 3, and consider the manifold $Z:=\Sigma\times \R$, where $\R$ has the coordinate $\lambda$ and put
\begin{equation}
\psi:=\varphi+\lambda \omega^2
\end{equation}
on $Z$.

The exterior derivative $\Psi=d\psi$ is given by
\begin{equation}
\Psi=d\omega^1+d\lambda\wedge \omega^2+\lambda d\omega^2=
-(I+\lambda)\omega^1\wedge\omega^3+\omega^2\wedge\omega^3+d\lambda \wedge \omega^2.
\end{equation}

A coframe on $Z$ is given by 
\begin{equation}
\{\omega^1,\omega^2,\omega^3;d\lambda\},
\end{equation}
and the corresponding frame
\begin{equation}
\{\hat{e}_1,\hat{e}_2,\hat{e}_3,\frac{\partial}{\partial\lambda}\},
\end{equation}
where we use $\{\hat e_1,\hat e_2,\hat e_3\}$ for the dual frame of $\{\omega^1,\omega^2,\omega^3\}$ on $\Sigma$.

It follows that the Cartan system $\mathcal C(\Psi)$ is given by
\begin{equation}
\begin{split}
\mathcal C(\Psi)& :=\{\frac{\partial}{\partial\lambda}\innerprod \Psi,
\hat{e}_2\innerprod \Psi,\hat{e}_3\innerprod \Psi\}
=\{\omega^2,-d\lambda+\omega^3,(\lambda+ I)\omega^1-\omega^2  \}\\
& =\{\omega^2,d\lambda-\omega^3,(\lambda+ I)\omega^1  \}
\end{split}
\end{equation}

However, remark that ($\mathcal C(\Psi),\omega^1)$ is  not a Pfaffian system with independence condition on $\Sigma$ because 
$(\lambda+ I)\omega^1\in \mathcal C(\Psi)$.

Hence we need an involutive prolongation of  ($\mathcal C(\Psi),\omega^1)$ on a submanifold on $Z$. Following first part of Remark \ref{Problems}
we consider the submanifold $Z_1:=\{\lambda=-I\}\subset Z$, and therefore 
\begin{equation}\label{Cartan sys on Z_1}
\mathcal C(\Psi)_{|Z_1}=\{\omega^2, dI+\omega^3\}.
\end{equation}

We can see that the iterative construction stops for $k=1$, and therefore, if we put $Y:=\{\lambda=-I\}$, 
then $(\mathcal C(\Psi)_{|Y},\omega_{|Y})$ is  a Pfaffian system with independence condition on $Z_1=Z_2=\dots=Y$.

Remark that by identifying the graph of a function with its domain of definition, we can see that actually $Y=\Sigma$.

We obtain

\begin{theorem}
\begin{enumerate}
\item The Euler-Lagrange differential system of the variational problem $(I,\varphi;\omega)$ is 
\begin{equation}
(dI+\omega^3)|_{\hat{\gamma}^\perp}=0, 
\end{equation}
\item and the corresponding Euler-Lagrange equation is 
\begin{equation}\label{EL_{eq}}
(I_3\circ\hat\gamma^\perp+1)k_{T}^{(N)}=(I_1\circ \hat\gamma^\perp)\sigma^2.
\end{equation}
\end{enumerate}
\end{theorem}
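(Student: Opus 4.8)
The plan is to treat the two assertions separately: the first is essentially a reading-off of the prolongation already carried out above, and the second is a pullback computation along the normal lift by means of \eqref{3.14}.

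For part (1), recall that by the general theory of Section \ref{sec: variational problem in Griffiths' formulation} the Euler-Lagrange system of the variational problem $(\mathcal I,\omega^1;\varphi)$ is the Cartan system $\mathcal C(\Psi)$ of $\Psi=d\psi$, restricted to the prolongation submanifold $Y$. That prolongation has already been performed in the paragraphs preceding the theorem: on $Y=\{\lambda=-I\}=\Sigma$ one has $\mathcal C(\Psi)|_{Y}=\{\omega^2,\,dI+\omega^3\}$, and the iterative prolongation terminates at $k=1$. The extremals are, by definition, the integral manifolds of this Pfaffian system taken with the independence condition $\omega^1$. But every normal lift $\hat\gamma^\perp$ is already an integral manifold of $(\mathcal I=\{\omega^2\},\omega^1)$, so $\hat\gamma^{\perp*}\omega^2=0$ holds identically and the generator $\omega^2$ imposes no new restriction. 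Hence the sole extremality condition is $(dI+\omega^3)|_{\hat\gamma^\perp}=0$, which is exactly assertion (1).

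For part (2), I would expand the differential of the Cartan scalar in the moving coframe, $dI=I_1\omega^1+I_2\omega^2+I_3\omega^3$, and then pull the one-form $dI+\omega^3$ back along $\hat\gamma^\perp$ using the three identities \eqref{3.14}, namely $\hat\gamma^{\perp*}\omega^1=\sigma\,dt$, $\hat\gamma^{\perp*}\omega^2=0$, and $\hat\gamma^{\perp*}\omega^3=-\frac{k_{T}^{(N)}}{\sigma}\,dt$. The middle identity annihilates the $I_2\omega^2$ term, so the pullback of $dI+\omega^3$ collapses to a single multiple of $dt$,
\begin{equation*}
\hat\gamma^{\perp*}(dI+\omega^3)=\Big[(I_1\circ\hat\gamma^\perp)\,\sigma-(I_3\circ\hat\gamma^\perp)\frac{k_{T}^{(N)}}{\sigma}-\frac{k_{T}^{(N)}}{\sigma}\Big]dt.
\end{equation*}
Setting the bracket to zero and clearing the denominator by multiplying through by $\sigma$ yields $(I_1\circ\hat\gamma^\perp)\sigma^2=(I_3\circ\hat\gamma^\perp+1)\,k_{T}^{(N)}$, which is precisely the stated Euler-Lagrange equation.

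As for where the genuine difficulty sits: the substantive work is not in the theorem itself but in the prolongation preceding it, i.e. in recognising the correct submanifold $Y=\{\lambda=-I\}$ on which $(\mathcal C(\Psi),\omega^1)$ becomes an honest Pfaffian system with independence condition, and in checking, as flagged in Remark \ref{Problems}, that the construction stops at the first step. Within the proof proper the only delicate point is bookkeeping in the pullback: one must confirm that the a priori free directional derivative $I_2$ really drops out, which happens precisely because $\hat\gamma^\perp$ is a \emph{normal} lift with $\hat\gamma^{\perp*}\omega^2=0$. Granting this, both parts follow by direct substitution.
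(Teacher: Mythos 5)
Your proposal is correct and follows essentially the same route as the paper: part (1) is read off from the restricted Cartan system $\mathcal C(\Psi)|_{Z_1}=\{\omega^2,\,dI+\omega^3\}$ obtained in the prolongation, and part (2) is the pullback of $dI+\omega^3$ along $\hat\gamma^\perp$ via \eqref{3.14}. The paper's proof states these two steps in two lines; you have merely written out the substitution $dI=I_1\omega^1+I_2\omega^2+I_3\omega^3$ and the resulting algebra explicitly, which is a faithful expansion of the same argument.
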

\begin{proof}
From \eqref{Cartan sys on Z_1} we obtain 1. Moreover, using now the equations \eqref{3.14}, the Euler-Lagrange equation follows. 

$\qedd$
\end{proof}

Moreover we have
\begin{theorem}\label{thm: nec and suffic cond}
The Euler-Lagrange equation \eqref{EL_{eq}} is necessary and sufficient condition for the extremals of the functional \eqref{N-var functional}.
\end{theorem}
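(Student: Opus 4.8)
The plan is to split the statement into its two implications, since only one of them needs genuine work. Sufficiency is already established by the preceding theorem: the integral lifts $\hat\gamma^\perp$ satisfying $(dI+\omega^3)|_{\hat\gamma^\perp}=0$, equivalently \eqref{EL_{eq}}, are exactly the extremals delivered by Griffiths' construction, and those Euler-Lagrange equations are always sufficient for extremality. The content to be proved is therefore necessity, and here one must heed Remark \ref{Problems}(2): for $\mathrm{rank}\,\cI>0$ the Griffiths Euler-Lagrange equations need not be necessary in general. By \cite{H1992} they are necessary for every \emph{regular} extremal, i.e. every generic integral curve of a bracket generating distribution. So the proof reduces to showing that the distribution $D:=\cI^\perp$ of our problem is bracket generating and that this rules out any non-regular extremal.

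First I would identify $D$. As $\cI=\{\omega^2\}$ on the $3$-manifold $\Sigma$, its annihilator is $D=\langle\hat e_1,\hat e_3\rangle$, and by \eqref{3.15} the velocity $\hat T^\perp=\sigma\,\hat e_1-(k^{(N)}_{T}/\sigma)\,\hat e_3$ of every admissible lift lies in $D$. To test the bracket generating condition I would compute $[\hat e_1,\hat e_3]$ from the structure equations (2.2). Since $\omega^\alpha(\hat e_\beta)=\delta^\alpha_\beta$ is constant, $\omega^\alpha([\hat e_1,\hat e_3])=-d\omega^\alpha(\hat e_1,\hat e_3)$, and substituting $d\omega^2=-\omega^1\wedge\omega^3$ gives
\begin{equation*}
\omega^2([\hat e_1,\hat e_3])=-d\omega^2(\hat e_1,\hat e_3)=(\omega^1\wedge\omega^3)(\hat e_1,\hat e_3)=1\neq 0 .
\end{equation*}
Thus the single bracket $[\hat e_1,\hat e_3]$ already recovers the missing $\hat e_2$-direction, so $\{\hat e_1,\hat e_3,[\hat e_1,\hat e_3]\}$ spans $T\Sigma$ everywhere and $D$ is bracket generating.

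The conceptual point is that this computation says precisely that $\omega^2$ is a contact form, since $\omega^2\wedge d\omega^2=-\omega^2\wedge\omega^1\wedge\omega^3=\omega^1\wedge\omega^2\wedge\omega^3\neq 0$. Hence $(\cI,\omega^1)$ is a contact Pfaffian system whose integral curves are Legendrian; a contact distribution admits no nontrivial abnormal (singular) integral curves, so \emph{every} extremal of $\Phi$ is regular in the sense of \cite{H1992}. Hsu's theorem then forces every extremal to satisfy $(dI+\omega^3)|_{\hat\gamma^\perp}=0$, i.e. \eqref{EL_{eq}}; together with sufficiency this gives the equivalence.

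The hard part is not computational but conceptual: one has to notice, via Remark \ref{Problems}(2), that necessity can fail for a positive-rank $\cI$, and then isolate the right hypothesis --- the contact/bracket generating property of $D$ --- that excludes singular extremals so that \cite{H1992} applies. After that the verification collapses to the one Lie bracket above, read off directly from the structure equations (2.2).
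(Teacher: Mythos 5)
Your proposal is correct and follows essentially the same route as the paper: sufficiency is dismissed as immediate from the Griffiths construction, and necessity is reduced, via regularity of curves on a contact manifold and Hsu's theorem \cite{H1992}, to showing that $D=\mathcal I^\perp=\langle\hat e_1,\hat e_3\rangle$ is bracket generating, which both you and the paper read off from the structure equations (2.2). The only cosmetic difference is that you evaluate $\omega^2$ on the single bracket $[\hat e_1,\hat e_3]$ (equivalently, note $\omega^2\wedge d\omega^2\neq 0$), whereas the paper computes all three brackets of the frame explicitly; the underlying argument is the same.
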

\begin{proof}

The sufficiency is obvious from construction. In order to prove the necessity, since any curve on a contact manifold is regular,  it is enough to show that 
the distribution $D=\mathcal I^\perp$ is bracket generating. Since $\mathcal I=\{\omega^2\}$ on $\Sigma$ it follows that $D=\langle \hat e_1, \hat e_3\rangle$. Cartan formula
$d\omega(X,Y)=X(\omega(Y))-\omega([X,Y])-Y(\omega(X))$
implies
\begin{equation*}
 [\hat e_1,\hat e_2]=-K\hat e_3,\ 
 [\hat e_2,\hat e_3]=-\hat e_1,\ 
 [\hat e_3,\hat e_1]=-I\hat e_1-\hat e_2-J\hat e_3.
\end{equation*}

It follows immediately that $D_1:=[D,D]=\langle \hat e_1,  \hat e_2, \hat e_3\rangle=T\Sigma$ and therefore $D$ is bracket generating. 

$\qedd$

\end{proof}

We remark that 
\begin{equation}
\psi_{|Y}=\omega^1-I\omega^2,\quad \Psi_{|Y}=
-I_1\omega^1\wedge\omega^2+(I_3+1)\omega^2\wedge\omega^3.
\end{equation}

Since $\dim Y=2\times 1+1=3$ it follows $m=1$, and hence we obtain
\begin{equation}
\psi_{|Y}\wedge \Psi_{|Y}^m=(I_3+1)\omega^1\wedge\omega^2\wedge\omega^3\neq 0,
\end{equation}
for $I_3\neq 1$. That is, in this case, the variational problem $(\mathcal I, \omega^1;\varphi)$ is {\it non-degenerate}.

The end points conditions are given by
\begin{equation}\label{end_point}
K=\{\omega^1,\omega^2\}
\end{equation}
and the variational problem is well-posed with reduced momentum space 
$$Q=\Sigma\slash_{\{\omega^1=0,\omega^2=0\}}=M.$$

In our case an admissible variation is a map 

\begin{equation}
\hat{\gamma}^\perp:[a,b]\times [0,\varepsilon]\to \Sigma, (t,u)\mapsto 
\hat{\gamma}^\perp(t,u)
\end{equation}
such that each $u$-curve in the variation, namely 
$\hat{\gamma}^\perp_u:[a,b]\to \Sigma$ is integral manifold of $(\mathcal I,\omega)$.

Then the admissible variations satisfying the end point conditions \eqref{end_point} means
\begin{equation}
\hat\gamma^{\perp*}(\omega^1)=\hat\gamma^{\perp*}(\omega^2)=0 \ {\rm on } \ \{a,b\}\times [0,\varepsilon].
\end{equation}


 That is this corresponds to varying a curve $\gamma$ in $Q=M$ keeping its end points fixed in the usual sense.

In other words, we obtain
\begin{theorem}\label{thm: N EL eq}
\begin{enumerate}
\item 
 If $I_1\circ \hat\gamma^\perp= 0$, then Euler-Lagrange equation implies  $k_{T}^{(N)}=0$, provided  $I_3\circ\hat\gamma^\perp+1\neq 0$, i.e. in this case the $N$-extremals are the $N$-parallels. Conversely, if the $N$-parallels are $N$-extremals, then $I_1\circ \hat\gamma^\perp= 0$.
\item  If $I_1\circ \hat\gamma^\perp \neq 0$, then Euler-Lagrange equation implies 
\begin{equation}
k_{T}^{(N)}=\frac{I_1\circ \hat\gamma^\perp}{I_3\circ\hat\gamma^\perp+1},
\end{equation}
provided  $I_3\circ\hat\gamma^\perp+1\neq 0$, i.e. in this case the $N$-extremals are those curves on $\Sigma$ whose geodesic curvature is given above. 
\end{enumerate}
\end{theorem}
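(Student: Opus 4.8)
The statement is a direct algebraic reading of the Euler--Lagrange equation \eqref{EL_{eq}}, so the plan is to extract both cases from it and to supply the two ingredients that make the extraction rigorous: the characterization of $N$-parallels by the vanishing of $k_T^{(N)}$ (the Corollary of Section 4), and the fact that every extremal actually satisfies \eqref{EL_{eq}} (Theorem \ref{thm: nec and suffic cond}, whose necessity part rests on $D=\mathcal I^\perp$ being bracket generating on the contact manifold $\Sigma$). I would also record at the outset that $\sigma(t)\neq 0$ along any regular curve, since $g_N$ is positive definite and $T\neq 0$; this guarantees that the factor $\sigma^2$ appearing in \eqref{EL_{eq}} never vanishes, which is what legitimizes the cancellations below.

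For part 1, I would set $I_1\circ\hat\gamma^\perp=0$ in \eqref{EL_{eq}}. The right-hand side then vanishes, and the standing hypothesis $I_3\circ\hat\gamma^\perp+1\neq 0$ lets me cancel that factor to conclude $k_T^{(N)}=0$; by the Corollary this says precisely that the $N$-extremal is an $N$-parallel. For the converse I would start from an $N$-parallel that is simultaneously an $N$-extremal: being an extremal it satisfies \eqref{EL_{eq}} by Theorem \ref{thm: nec and suffic cond}, and being an $N$-parallel it has $k_T^{(N)}=0$ by the Corollary. Substituting $k_T^{(N)}=0$ into \eqref{EL_{eq}} leaves $(I_1\circ\hat\gamma^\perp)\sigma^2=0$, and since $\sigma\neq 0$ this forces $I_1\circ\hat\gamma^\perp=0$, as asserted.

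For part 2, with $I_1\circ\hat\gamma^\perp\neq 0$ and $I_3\circ\hat\gamma^\perp+1\neq 0$, I would simply solve \eqref{EL_{eq}} algebraically for $k_T^{(N)}$ by dividing through by the nonzero factor $I_3\circ\hat\gamma^\perp+1$. This produces the displayed value of the $N$-parallel curvature, which, in view of the relation between $k_T^{(N)}$ and the lift established in \eqref{3.14}--\eqref{3.15} of Section 4, is exactly the geodesic curvature of the resulting extremal curve on $\Sigma$.

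There is no genuine analytic obstacle here: once \eqref{EL_{eq}} and Theorem \ref{thm: nec and suffic cond} are in hand, the whole argument is purely algebraic. The only points demanding care are bookkeeping ones --- keeping the nonvanishing hypotheses $\sigma\neq 0$ and $I_3\circ\hat\gamma^\perp+1\neq 0$ explicit so that each cancellation and the final division are valid, and invoking the equivalence ``$\gamma$ is $N$-parallel $\Leftrightarrow k_T^{(N)}=0$'' in \emph{both} directions when handling the two implications of part 1.
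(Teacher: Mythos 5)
Your proposal is correct and follows essentially the same route as the paper: the paper states Theorem \ref{thm: N EL eq} as an immediate consequence of the Euler--Lagrange equation \eqref{EL_{eq}} (introduced by ``In other words, we obtain'', with no separate proof), which is exactly your algebraic case analysis. Your write-up is in fact slightly more careful than the paper's, since you explicitly invoke $\sigma\neq 0$, the equivalence between $N$-parallels and $k_T^{(N)}=0$, and the necessity part of Theorem \ref{thm: nec and suffic cond} for the converse in part 1.
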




\section{Special Finsler surfaces}
\subsection{Berwald surfaces}
We discuss the {\it Berwald surfaces} case.  It is known that there are only two cases:
\begin{enumerate}
\item $K\neq 0$, that is $(M,F)$ is Riemannian surface,
\item $K=0$, that is $(M,F)$ is locally Minkowski plane.
\end{enumerate}

We discuss first the {\it Riemannian} case $(M,a)$. Indeed, in this case, $I=0$ and the Euler-Lagrange equations 
\eqref{EL_{eq}} read $k_{T}^{(N)}=0$. In other words, the solutions of the Euler-Lagrange equations 
\eqref{EL_{eq}} are the $N$-parallel curves, or equivalently, the solutions of the following SODE:

\begin{equation}
\frac{d^2\gamma^i}{dt^2} + \gamma^i_{jk}\big(\gamma(t)\big)\frac{d\gamma^j}{dt}\frac{d\gamma^k}{dt} = 
\frac{d}{dt}\Bigl[ \log \alpha(t) \Bigr] \frac{d\gamma^i}{dt},
\end{equation}
where $\alpha(t)=\sqrt{a(T,T)}$, that is the usual equation of Riemannian geodesics in arbitrary parameterization.

In the locally Minkowski case we have 
\begin{equation}
I_{1}=0,\quad I_{2}=0, \quad K=0.
\end{equation}

\begin{lemma}\label{lem: I_3 constant}
Let $(M,F)$ be a Finsler surface. If $I_3=$constant everywhere on $\Sigma$, then $I_3$ must vanish on $\Sigma$.
\end{lemma}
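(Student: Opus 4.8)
The plan is to exploit the fact that index-$3$ differentiation is differentiation along the fibres of $\pi:\Sigma\to M$, and that these fibres are \emph{closed} curves. A nonzero constant value of $I_3$ would then be incompatible with the single-valuedness of $I$ on $\Sigma$, and this global obstruction is what forces $I_3$ to vanish.

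First I would recall from the Remarks following equation (2.4) that the Chern connection splits $T\Sigma = H\Sigma\oplus V\Sigma$ with vertical distribution $V\Sigma = \langle\hat{e}_3\rangle$, so that $\hat{e}_3$ is tangent to the fibre $\Sigma_x=\pi^{-1}(x)$ at every point. Since each $\Sigma_x$ is a strictly convex closed curve (an indicatrix) and $\hat{e}_3$ is a nowhere-vanishing smooth vector field tangent to it, the integral curve of $\hat{e}_3$ through any point of $\Sigma_x$ sweeps out the whole of $\Sigma_x$ and closes up after some positive period $P=P(x)>0$.

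Next, from $dI = I_1\omega^1 + I_2\omega^2 + I_3\omega^3$ together with $\omega^i(\hat{e}_j)=\delta^i_j$ I read off $\hat{e}_3(I)=dI(\hat{e}_3)=I_3$; hence, parametrizing $\Sigma_x$ by the flow of $\hat{e}_3$ with arc-parameter $s$, one has $\frac{dI}{ds}=I_3$. Under the hypothesis $I_3\equiv c$ this integrates to $I(s)=I(0)+cs$ along $\Sigma_x$. But $I$ is a globally defined smooth function on $\Sigma$, so it is single-valued and $P$-periodic along the closed curve $\Sigma_x$; integrating around the loop gives $0=\oint_{\Sigma_x}\frac{dI}{ds}\,ds = c\,P(x)$, and since $P(x)>0$ we conclude $c=0$, as required.

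The computation is immediate; the step I regard as the crux is the justification that the integral curves of $\hat{e}_3$ are exactly the closed fibres $\Sigma_x$, so that the periodicity of $I$ can be invoked. This is where the global closedness and convexity of the indicatrix genuinely enter, and it is essential: applying the Ricci identities (2.6) to $f=I$ (using the Bianchi relation $I_2=J$ and $dI_3=0$) yields only local consistency relations such as $J_3=I_1$ and $J_1-(I_1)_2=-Kc$, none of which force $c=0$ on their own. Thus the argument cannot be purely local, and the topology of the fibre is doing the decisive work.
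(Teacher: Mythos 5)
Your proof is correct. The chain of identifications --- $\hat e_3$ spans the vertical distribution and hence is tangent to each fibre $\Sigma_x$; the fibre is a closed curve; $I_3=\hat e_3(I)=dI/ds$ along it, so $I_3\equiv c$ integrates around the loop to $0=c\,P(x)$ --- is exactly what is needed, and your side remark that the Ricci identities (2.6) yield only relations such as $J_3=I_1$ and $J_1-(I_1)_2=-Kc$, and so cannot settle the matter locally, is also accurate. Your route differs from the paper's in one substantive respect. The paper first asserts that $I_3$ constant forces $I$ itself to be constant along every indicatrix --- an assertion whose justification is precisely your periodicity argument, which the paper leaves implicit --- and then invokes the known fact (\cite{BCS2000}, p.~85) that the Cartan scalar has zero average over each indicatrix, $\int_0^L I(t)\,dt=0$, to conclude that this constant value must itself be zero. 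Thus the paper's argument, once its first step is filled in, proves the stronger statement $I\equiv 0$ on $\Sigma$ (i.e., the structure is Riemannian), at the cost of relying on the cited average-value theorem; your argument is self-contained, uses only the single-valuedness of $I$ around the closed fibre, and yields exactly the conclusion $I_3\equiv 0$ stated in the lemma. In effect, you have supplied the rigorous version of the step the paper takes for granted, and discarded the part of the paper's proof that the lemma does not actually need.
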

\begin{proof}

If $I_3=$constant everywhere on $\Sigma$ it follows that the scalar $I$ must be constant along every indicatrix curve $\Sigma_x\subset T_xM$. On the other hand, it is known that the average value of the Cartan scalar, over the indicatrix $\Sigma_x$ must be zero (see for example \cite{BCS2000}, p. 85), that is
\begin{equation}\label{I average value}
\int_0^LI(t)dt=0,
\end{equation}
where $I(t)$ is the Cartan scalar evaluated over the indicatrix and $L$ is the Riemannian length of $\Sigma_x$. 

If $I(t)=c$=constant, then \eqref{I average value} implies $cL=0$, that is $c=0$ since the indicatrix length $L$ cannot be zero.

$\qedd$
\end{proof}

In this case, we have
\begin{proposition}
If $(M,F)$ is a Minkowski surface, then the solutions of the Euler-Lagrange equations 
\eqref{EL_{eq}} coincide to the $N$-parallel curves and they differ from the usual geodesics. 
\end{proposition}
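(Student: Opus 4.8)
The plan is to read off the first assertion from the Euler--Lagrange equation together with Theorem~\ref{thm: N EL eq}, and then to separate the resulting $N$-parallels from geodesics by exploiting the non-Riemannian character of a Minkowski surface. For the first assertion I would start from the fact recorded just above, that a Minkowski surface satisfies $I_1=0$ (as well as $I_2=0$ and $K=0$). Feeding $I_1=0$ into the Euler--Lagrange equation \eqref{EL_{eq}} reduces it to $(I_3\circ\hat\gamma^\perp+1)\,k_T^{(N)}=0$, so $k_T^{(N)}=0$ will follow as soon as the coefficient $I_3+1$ is nonzero. This is precisely the role of Lemma~\ref{lem: I_3 constant}: were $I_3+1$ identically zero, then $I_3\equiv-1$ would be constant on $\Sigma$, which the Lemma forbids because the only admissible constant value of $I_3$ is $0$. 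One can sharpen this fiberwise, since $I_1=I_2=0$ gives $dI=I_3\,\omega^3$, and as $\omega^3$ restricts to the arclength element on each closed indicatrix $\Sigma_x$ while $\int_{\Sigma_x}I=0$, the function $I_3=dI/ds$ cannot equal $-1$ everywhere on $\Sigma_x$. Granting $I_3+1\neq0$, Theorem~\ref{thm: N EL eq}(1) gives $k_T^{(N)}=0$; that is, the $N$-extremals coincide with the $N$-parallels.

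For the second assertion I would compare the governing second order equations. By \eqref{7.3} an $N$-parallel obeys $\ddot\gamma^i+\Gamma^i_{jk}(\gamma,N)\dot\gamma^j\dot\gamma^k=\tfrac{d}{dt}[\log\sigma]\,\dot\gamma^i$ with $\sigma=\sqrt{g_N(T,T)}$, whereas a geodesic satisfies the same form of equation but with reference vector $T$ and speed $F(\dot\gamma)=\sqrt{g_T(T,T)}$. In the Riemannian subcase the two prescriptions are identical, since $g$ is direction-free, forcing $g_N=g_T$ and $\Gamma(\cdot,N)=\Gamma(\cdot,T)$; this is exactly what produces their coincidence there. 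On a Minkowski surface, by contrast, $I\neq0$ (otherwise the structure would be Riemannian), so the fundamental tensor genuinely depends on direction and $g_N(T,T)\neq g_T(T,T)$ whenever $N$ and $T$ are not aligned. Hence the speed $\sigma$ driving \eqref{7.3} is not the Finslerian arclength speed $F(\dot\gamma)$, so the $N$-parallel equation is genuinely distinct from the geodesic equation.

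The step I expect to be the real obstacle is this last comparison. Because a Minkowski surface is flat, both families are unparameterized straight lines, so the distinction must be located at the level of the parameterized curves and of the equations themselves; the argument has to isolate $\sigma=\sqrt{g_N(T,T)}$ as the essentially non-geodesic datum and use $I\neq0$ to show it is irreconcilable with the Finslerian arclength $F(\dot\gamma)$. A subsidiary technical point is upgrading $I_3+1\neq0$ from ``not identically zero'' to genuinely nonzero along each extremal, for which the indicatrix-averaging observation above is the natural device.
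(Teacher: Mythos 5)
Your treatment of the first assertion is sound and is essentially the paper's own argument: on a locally Minkowski surface $I_1=0$, so the Euler--Lagrange equation \eqref{EL_{eq}} collapses to $(I_3\circ\hat\gamma^\perp+1)\,k_T^{(N)}=0$, and Lemma \ref{lem: I_3 constant} rules out $I_3\equiv -1$, giving $k_T^{(N)}=0$. (Your fiberwise refinement really rests on the periodicity of $I$ along the closed indicatrix --- if $dI/ds\equiv -1$ on $\Sigma_x$ then $I$ could not return to its initial value --- rather than on $\int_{\Sigma_x}I=0$; but the conclusion is fine, and you correctly flag the residual ``provided $I_3+1\neq 0$'' caveat that the paper's Theorem \ref{thm: N EL eq} also carries.)

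The second assertion is where your argument genuinely fails. You try to separate the $N$-parallels from the geodesics \emph{downstairs on $M$}, by arguing that since $\sigma=\sqrt{g_N(T,T)}$ differs from $F(\dot\gamma)=\sqrt{g_T(T,T)}$, the two second-order equations are ``irreconcilable.'' But formally different equations need not have different solution sets, and here they do not: in adapted coordinates on a locally Minkowski surface the Chern connection coefficients vanish, so the $N$-parallel condition $D_T^{(N)}N=0$ forces $N$ to be a constant vector, hence (by the orthogonality relations \eqref{orthog_conditions} and $F(T)=1$) $T$ is constant, hence $\sigma$ is constant along the curve. The right-hand side of \eqref{7.3} is $\frac{d}{dt}\bigl[\log\sigma\bigr]\dot\gamma^i=0$ --- only the derivative of $\log\sigma$ enters the equation, not the value of $\sigma$ --- so the $N$-parallel equation reduces to $\ddot\gamma^i=0$, which is exactly the geodesic equation. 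On $M$ both families are the straight lines with constant unit-Finsler-speed velocity, and they coincide as parameterized curves; no comparison of speeds can separate them. The separation asserted in the proposition lives on $\Sigma$, and that is the paper's actual one-line argument: the $N$-extremals are by construction the normal lifts $(\gamma,N)$, which by \eqref{3.15} with $k_T^{(N)}=0$ are integral curves of $\langle\hat e_1\rangle$, whereas geodesics lift canonically to $(\gamma,\dot\gamma)$, integral curves of $\langle\hat e_2\rangle$; since $\hat e_1$ and $\hat e_2$ are everywhere linearly independent, these curves on $\Sigma$ can never coincide. (A side remark: your premise ``$I\neq 0$, otherwise the structure would be Riemannian'' is also not safe --- a locally Minkowski surface can be Riemannian, namely flat Euclidean --- whereas the $\Sigma$-level argument works uniformly in that case as well.)
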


\begin{proof}
From Theorem \ref{thm: N EL eq} and Lemma \ref{lem: I_3 constant} it follows that $N$-extremals and $N$-parallels must coincide for a Minkowski surface. 

One can see  that Finslerian geodesics and the $N$-extremals cannot coincide on  $\Sigma$ because their tangent vectors $\hat e_2$ and  $\hat e_1$, respectively, are linearly independent. 
$\qedd$
\end{proof}


\subsection{A new class of Finsler spaces}

We define a new class of special Finsler spaces as follows.

\begin{definition}
A Finsler surface $(M,F)$ that satisfies the following conditions
\begin{equation}\label{S-surface}
I_{1}=0,\qquad I_{3}\neq 0
\end{equation}
is called an {\it $S$-Finsler surface}.
\end{definition}

\begin{remark}
Any Berwald manifold is an $S$-manifold.
\end{remark}

Let us remark that on an $S$-manifold we have the Bianchi equations
\begin{equation}
\begin{split}
& dI=I_2\omega^2+I_3\omega^3\\
& dK=K_1\omega^1+K_2\omega^2-(KI+I_{22})\omega^3\\
& dI_2=-KI_3\omega^1+I_{22}\omega^2+I_{23}\omega^3.
\end{split}
\end{equation}

The tableau of the free derivatives has Cartan characters $s_1=3$, $s_2=2$, $s_3=1$, the Cartan test yields $7=s_1+2s_2+3s_3$ and therefore  the system is involutive. The Cartan-K\"ahler theory implies that, modulo diffeomorphism, such structures depend on 2 functions of 2 variables.  

Returning to our variational problem, we have
\begin{proposition}\label{prop: geom1}
If $(M,F)$ is an $S$-Finsler surface, then the integral lines of codimension one foliation 
\begin{equation}
\{\omega^{2}=0,\omega^{3}=0\}
\end{equation}
coincide with the $N$-extremals.
\end{proposition}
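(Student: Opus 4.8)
The plan is to read off the Euler--Lagrange system of the $N$-variational problem directly on an $S$-surface and to recognize it as the Pfaffian system that cuts out the given foliation. By the computation leading to \eqref{Cartan sys on Z_1}, with $Y=\Sigma$, the $N$-extremals are precisely the integral manifolds, subject to the independence condition $\omega^1$, of the Pfaffian system $\mathcal{C}(\Psi)_{|Y}=\{\omega^2,\,dI+\omega^3\}$. Thus the entire argument reduces to simplifying this system under the defining conditions \eqref{S-surface}.

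First I would substitute the $S$-surface Bianchi expansion $dI=I_2\omega^2+I_3\omega^3$, valid because $I_1=0$, into the generator $dI+\omega^3$ to obtain
\begin{equation*}
dI+\omega^3=I_2\omega^2+(I_3+1)\omega^3 .
\end{equation*}
Since $\omega^2$ is already a generator of $\mathcal{C}(\Psi)_{|Y}$, the summand $I_2\omega^2$ is absorbed, so that as a Pfaffian module $\mathcal{C}(\Psi)_{|Y}=\{\omega^2,\,(I_3+1)\omega^3\}$. Next, on the locus where $I_3+1\neq 0$ --- exactly the non-degeneracy condition of the variational problem recorded after Theorem~\ref{thm: nec and suffic cond} --- the forms $(I_3+1)\omega^3$ and $\omega^3$ span the same line, whence $\mathcal{C}(\Psi)_{|Y}=\{\omega^2,\omega^3\}$. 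The integral manifolds of this system with $\hat\gamma^{\perp*}\omega^1\neq 0$ are precisely the curves tangent to the line field $\ker\omega^2\cap\ker\omega^3=\langle\hat{e}_1\rangle$, i.e. the integral lines of $\{\omega^2=0,\omega^3=0\}$. This identifies the two families and proves the statement.

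I expect the only genuine obstacle to be the justification of $I_3+1\neq 0$, which is what permits replacing $(I_3+1)\omega^3$ by $\omega^3$. This is the non-degeneracy hypothesis already isolated in the discussion after Theorem~\ref{thm: nec and suffic cond}; moreover Lemma~\ref{lem: I_3 constant} rules out $I_3\equiv-1$, so the bad locus $\{I_3=-1\}$ is a proper closed subset of $\Sigma$ and the identification holds on its open dense complement. As an independent consistency check one can argue pointwise: by \eqref{3.14} every normal lift satisfies $\hat\gamma^{\perp*}\omega^2=0$ and $\hat\gamma^{\perp*}\omega^3=-\,k_{T}^{(N)}/\sigma\,dt$, so it is an integral line of $\{\omega^2=0,\omega^3=0\}$ exactly when $k_{T}^{(N)}=0$; on the other hand the Euler--Lagrange equation \eqref{EL_{eq}} with $I_1=0$ and $I_3+1\neq 0$ forces $k_{T}^{(N)}=0$ as well. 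Hence both families coincide with the $N$-parallels, in agreement with the system-level computation.
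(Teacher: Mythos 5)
Your proposal is correct and in substance coincides with the paper's own (largely implicit) argument: the paper obtains the proposition from Theorem \ref{thm: N EL eq}, whose first part says that $I_1\circ\hat\gamma^\perp=0$ together with $I_3\circ\hat\gamma^\perp+1\neq 0$ forces $k_{T}^{(N)}=0$, combined with the fact, read off from \eqref{3.14}, that $k_{T}^{(N)}=0$ is exactly the condition that the normal lift annihilates both $\omega^2$ and $\omega^3$. Your module-level reduction of $\mathcal C(\Psi)_{|Y}=\{\omega^2,\,dI+\omega^3\}$ to $\{\omega^2,\omega^3\}$ is the same computation carried out before pulling back by $\hat\gamma^\perp$, and your closing ``consistency check'' is literally the paper's route.

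One caution: your appeal to Lemma \ref{lem: I_3 constant} overstates what it delivers. That lemma only excludes $I_3$ being \emph{globally} constant equal to $-1$; it does not prevent the closed set $\{I_3=-1\}\subset\Sigma$ from being nonempty, or even from having nonempty interior (the fiberwise integral argument only guarantees points with $I_3\neq -1$ on each indicatrix), so ``open dense complement'' is unjustified. This, however, is a defect of the proposition as stated rather than of your argument: the paper itself silently carries the proviso $I_3\circ\hat\gamma^\perp+1\neq 0$ over from Theorem \ref{thm: N EL eq}, while the $S$-surface hypothesis \eqref{S-surface} asserts only $I_3\neq 0$, which does not rule out the locus $I_3=-1$. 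Your proof is therefore exactly as strong as the paper's: both establish the coincidence of the two families wherever $I_3+1\neq 0$.
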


\begin{remark}
The geometrical meaning of the $S$-Finsler surfaces it is now clear. These are those Finsler surfaces where the $N$-parallels coincide with the $N$-extremals. 
\end{remark}

Finding more examples and applications of the present theory are  subject of forthcoming research.



\bigskip

\begin{center}
Sorin V. SABAU\\

\bigskip

Department of Mathematics, Tokai University\\
Sapporo, 
005\,--\,8601 Japan

\medskip
{\tt sorin@tspirit.tokai-u.jp}
\end{center}

\begin{center}
Kazuhiro SHIBUYA\\

\bigskip

Graduate School of Science,
Hiroshima University \\
Hiroshima, 739\ --\ 8521 
Japan

\medskip
{\tt shibuya@hiroshima-u.ac.jp}
\end{center}

\end{document}